\tikzset{node distance=2em, ch/.style={circle,draw,on chain,inner sep=2pt},chj/.style={ch,join},every path/.style={shorten >=4pt,shorten <=4pt},line width=1pt,baseline=-1ex}
\newcommand{\fg}{{\mathfrak{g}}}
\newcommand{\fh}{{\mathfrak {h}}}
\newcommand{\bbC}{{\mathbb {C}}}
\newcommand{\bbP}{{\mathbb {P}}}
\newcommand{\bbR}{{\mathbb {R}}}
\newcommand{\bbZ}{{\mathbb {Z}}}
\newcommand{\CH}{{\mathcal {H}}}
\newcommand{\CL}{{\mathcal {L}}}
\newcommand{\tr}{{\mathrm {tr}}}
\newcommand{\Hom}{{\mathrm{Hom}}}
\newcommand{\la}{\langle}
\newcommand{\ra}{\rangle}
\newcommand{\be}{\begin{equation}}
\newcommand{\ee}{\end{equation}}
\newcommand{\bt}{\begin{theorem}}
\newcommand{\et}{\end{theorem}}
\newcommand{\bd}{\begin{definition}}
\newcommand{\ed}{\end{definition}}
\newcommand{\bp}{\begin{proposition}}
\newcommand{\ep}{\end{proposition}}
\newcommand{\bl}{\begin{lemma}}
\newcommand{\el}{\end{lemma}}
\newcommand{\bco}{\begin{corollary}}
\newcommand{\eco}{\end{corollary}}
\newcommand{\br}{\begin{remark}}
\newcommand{\er}{\end{remark}}
\newcommand{\bex}{\begin{example}}
\newcommand{\eex}{\end{example}}
\newcommand{\ben}{\begin{enumerate}}
\newcommand{\een}{\end{enumerate}}
\newcommand{\bc}{\begin{cases}}
\newcommand{\ec}{\end{cases}}
\newcommand{\bpf}{\begin{proof}}
\newcommand{\epf}{\end{proof}}
\newcommand{\bma}{\begin{bmatrix}}
\newcommand{\ema}{\end{bmatrix}}
\theoremstyle{Theorem}
\theoremstyle{Theorem}
\theoremstyle{Theorem}
\theoremstyle{Definition}
\newtheorem{theorem}{Theorem}[section]
\newtheorem{lemma}[theorem]{Lemma}
\newtheorem{proposition}[theorem]{Proposition}
\newtheorem{corollary}[theorem]{Corollary}
\newtheorem{remark}[theorem]{Remark}
\theoremstyle{definition}
\newtheorem{definition}[theorem]{Definition}
\newtheorem{example}[theorem]{Example}
\begin{document}

\title{Fusion rings revisited}

\author{Jiuzu Hong}
\address{Department of Mathematics, University of North Carolina at Chapel Hill,  Chapel Hill, NC 27599-3250, U.S.A.}
\email{jiuzu@email.unc.edu}
\maketitle
\begin{abstract}
In this note we describe a general elementary procedure to attach a fusion ring to any Kac-Moody algebra of affine type.   In the case of untwisted affine algebras, they are  usual fusion rings in the literature.  In the case of twisted affine algebras, they are exactly the twisted fusion rings defined  by the author in \cite{Ho2} via tracing out diagram automorphisms on conformal blocks for appropriate simply-laced Lie algebras.  We also relate the fusion ring to the modular S-matrix for any Kac-Moody algebra of affine type. 
\end{abstract}
\section{ Kac-Moody algebras of affine types}

Let $A$ be a generalized Cartan matrix of affine type of order $n+1$ (and rank $n$).  We denote by $a_i (i=0, 1, \cdots, n)$  the labelling on the vertices of  Dynkin diagram associated to $A$ (\cite[\S 4.8, Table Aff 1, Table Aff 2]{Ka}).  We denote by $\check{a}_i (i=0,1,\cdots, n)$  the labelling of the vertices of the affine Dynkin diagram associated to the transpose $A^t$ of $A$, which is obtained rom the Dynkin diagram of $A$ by reversing all arrows and keeping the same enumeration of vertices.

 Let $\fg(A)$ denote the Kac-Moody algebra associated to $A$ with Cartan subalgebra $\fh$, roots $\alpha_i (i=0,1,\cdots n)$ and coroots $\check{\alpha}_i (i=0,1,\cdots, n)$.  Note that $A=(\la \alpha_i, \check{\alpha}_j  \ra)$,where $\la \cdot,\cdot \ra$ is the natural pairing between $\fh$ and its dual $\fh^*$.   Let  $e_i, f_i \, (i=0,1,\cdots,  n)$ denote the Chevalley generators of $\fg(A)$.  The following relations hold:
\[  [e_i,f_j]=\delta_{ij} \check{\alpha}_i,  [h, e_i]=\alpha_i(h)e_i , \text{ and } [h,f_i]=-\alpha_i(h) f_i, \] 
for any $i,j=0,\cdots, n$ and $h\in \fh$.    Let $\fg'(A)$ denote the derived subalgebra $[\fg(A),\fg(A)]$ of $\fg(A)$. Note that $\fg'(A)$ is generated by $e_i,f_i (i=0,\cdots, n)$.    Fix an element $d\in \fh$ such that 
\[ \la \alpha_i,d \ra=0 \text{ for } i=1,\cdots, \quad  \la \alpha_0,d \ra=1  .\]
Then $\fg(A)=\fg'(A)+ \bbC d$, and $\fh$ is spanned by $\check{\alpha}_i (i=0,\cdots, n)$ and $d$.

Put
\[ K=\sum_{i=0}^n  \check{a}_i  \check{\alpha}_i   .\]
 Then $K$ is the central element of $\fg(A)$.

Let $\Lambda_i (i=0,\cdots, n)$ denote the fundamental weights of $\fg(A)$. 
The space $\fh^*$ is spanned by $\alpha_i, i=0,\cdots, n$ and $\Lambda_0$.  There is a special imaginary root $\delta\in \fh^*$ given by $\delta=\sum_{i=0}^n  a_i \alpha_i . $
Let $(\cdot |\cdot )$ denote the symmetric normalized bilinear form on $\fh$ such that:
\[\begin{cases}
(\check{\alpha}_i|  \check{\alpha}_j  )=  a_j \check{a}_j a_{ij}  \quad   (i,j=0,\cdots, n)\\
(\check{\alpha}_i| d)=0  \quad    i=1,\cdots, n  \\
(\check{\alpha}_0  |d )=a_0;  \quad    (d|d)=0.
\end{cases}
\]
  
  Let $\kappa: \fh\to  \fh^*$  be the induced isomorphism from $(\cdot|\cdot)$.  Then 
  \[ \kappa (\check{\alpha}_i)=  \frac{a_i }{\check{a}_i}  \alpha_i ;   \kappa(K)=\delta,    \kappa(d)=a_0 \Lambda_0 .  \]

We denote by  $\mathring{\fg}$ the subalgebra of $\fg(A)$ generated by $e_i,f_i\,(i=1,\cdots, n)$.   
 Let $\mathring{\fh}$ be the span of $\check{\alpha}_i \,(i=1,\cdots, n)$.  Then $\mathring{\fg}$ is a simple Lie algebra with the Cartan subalgebra $\mathring{\fh}$.  Let $\check{h}$ denote the dual Coxeter number of $\fg(A)$.   We denote by $\mathring{A}$ the Cartan matrix of $\mathring{\fg}$.

We will use $X_N^{(r)}$ to denote the  generalized Cartan matrix of affine type. When $r=1$, $X_N^{(1)}$ is untwisted; otherwise $X_N^{(r)}$ is twisted.

  The following is the table for the associated $\mathring{A}$ and $\check{h}$ when $A$ is of untwisted affine type.  
\begin{equation}
\label{Table_Dual_Coxeter_1}
 \begin{tabular}{|c  | c | c |c |c |c | c| c| c|c |c|c|c|c|c |c ||} 
 \hline
$A$ &   $A_n^{(1)}$  & $B_n^{(1)}$ &  $C_n^{(1)}$ &  $D_n^{(1)}$  &   $E_6^{(1)}$  &   $E_7^{(1)}$   &  $E_8^{(1)} $  & $ F_4^{(1)} $  & $ G_2^{(1)} $     \\ [0.5ex] 
 \hline
$\mathring{A}$ & $A_n$ &  $B_n$ & $C_n$  & $D_n$&  $E_6$ & $E_7$  &  $E_8$ & $F_4$ & $G_2$   \\ 
 \hline
$ \check{h} $ & n+1 &  2n-1 & n+1 &  2n-2  & 12 & 18 & 30 & 9 & 4       \\ [1ex] 
 \hline
\end{tabular} .
\end{equation}
In the above table,  $n\geq 1$ when $A=A_n^{(1)}$, $n\geq 3$ when $A=B_n^{(1)}$,  $n\geq 2$ when $A=C_n^{(1)}$, and $n\geq 4$ when $A=D_n^{(1)}$.  

The following is the table for the associated $\mathring{A}$ and $\check{h} $ when $A$ is of twisted affine type.  
\begin{equation}
\label{Table_Dual_Coxeter_2}
 \begin{tabular}{|c  | c | c |c |c |c | c| c| c|c |c|c|c|c|c |c ||} 
 \hline
$A$   & $A_{2n}^{(2)} $  &  $A_{2n-1}^{(2)}$  & $D_{n+1}^{(2)}$  & $ E_6^{(2)}$  &  $D_4^{(3)} $    \\ [0.5ex] 
 \hline
$\mathring{A}$  &  $C_n$  &   $C_{n}$  &  $B_n$  &  $F_4$ &  $G_2$  \\ 
 \hline
$ \check{h} $ & 2n+1 & 2n & 2n & 12 & 6       \\ [1ex] 
 \hline
\end{tabular}.
\end{equation}
In the above table, $n\geq 2$ when $A=A_{2n}^{(2)}$, $n\geq 3$ when $A=A_{2n-1}^{(2)}$, and $n\geq 3$ when $A=D_{n+1}^{(2)}$.

\section{Fusion rings associated to Kac-Moody algebras of affine types}

 Let $\mathring{P}\subset \mathring{\fh}^*$ (resp. $\mathring{Q}\subset \mathring{\fh}^*$) be the weight lattice  (resp. root lattice) associated to $\mathring{\fg}$.    Let $\check{ \mathring{P}  }$ denote the dual lattice of $\mathring{Q}$, i.e.  
 \[  \check{\mathring{P}} = \{ \check{\mu}\in \mathring{\fh} \,|\,    \la \lambda, \check{\mu}   \ra \in \bbZ ,  \text{ for any } \lambda\in \mathring{Q}   \} . \]
Similarly let $\check{\mathring{Q}}$ denote the dual lattice of $\mathring{P}$.   
Let $\mathring{P}^+$ (resp. $\check{\mathring{P}}^{+}$) be the set of dominant weights (resp. dominant coweights) of $\mathring{\fg}$.   Let $\mathring{\Phi}$ (resp. $\mathring{\Phi}^+$) denote the set of roots (resp. positive roots ) of $\mathring{\fg}$.

  Let $\mathring{\rho}$ (resp. $\check{\mathring{\rho}}$) denote the summation of fundamental weights (resp. coweights) of $\fg$.
Recall the symmetric bilinear form $(\cdot| \cdot)$ on $\fh$.  It restricts to the subspace $\mathring{\fh}$, and it induces the isomorphism $\kappa:  \mathring{\fh}\simeq \mathring{\fh}^*$.  

Put
\begin{equation} 
\label{Lattice_M}
 M =\begin{cases}    
\kappa(\check{\mathring{Q}}) \quad  \text{ if } A = X_N^{(1)}  \text{ or } A=A_{2n}^{(2)}\\
\mathring{Q}  \quad  \text{ otherwise}
\end{cases}.
 \end{equation}
Let $W$ (resp. $\mathring{W}$) denote the Weyl group of $\fg(A)$ (resp. $\mathring{\fg}$).       Then $W$ is naturally isomorphic to $\mathring{W}\ltimes M$ (cf.\cite[Proposition 6.5]{Ka}).

Let $\theta$ be the highest root of $\mathring{\fg}$.  Set 
\begin{equation}
\label{Dual_Theta}
 \check{\theta}=\begin{cases}
\kappa^{-1}(\theta)   \quad \text{ if } A = X_N^{(1)}  \text{ or } A=A_{2n}^{(2)}\\
\text{highest coroot of } \mathring{\fg}    \quad  \text{otherwise}.
\end{cases}   \end{equation}
In fact when $ A = X_N^{(1)} $, $\check{\theta}$ is the highest short coroot of $\mathring{\fg}$; when $A=A_{2n}^{(2)}$,  $\check{\theta}$ is twice of the highest short coroot of $\mathring{\fg}$.

Given any $k\in \mathbb{N}$. We put 
 \begin{equation}
 \label{Weight_Level}
 P_{k}=\{   \lambda\in \mathring{P}^+   \,  | \,   \la  \lambda  , \check{\theta}  \ra     \leq k    \}.     \end{equation}
We also put
\[ \check{P}_k=\{  \check{\lambda}\in \check{\mathring{P}}^+    \,|\,  \la \check{\lambda}, \theta   \ra\leq  k     \}  ,\]
(we only need it when $A=X_N^{(r)}$ with $r>1$ and $A\neq A_{2n}^{(2)}$).

Let $\mathring{T}$ denote the torus $\Hom(\mathring{P}, \bbC^\times )$.    
 We define a finite subset $\Sigma_k$ of $\mathring{T}$. 
When $A=X_N^{(1)}$ or $A=A_{2n}^{(2)}$, we set
\begin{equation} 
\label{Finite_Set_torus}
 \Sigma_k=\{ e^{\frac{2\pi i}{k+\check{h}}(\mathring{\rho}+\lambda|\cdot )  } \in \mathring{T}   \,|\,    \lambda\in P_k   \}  .\end{equation}
  When $A=X_N^{(r)}$ with $r>1$ and $A\neq A_{2n}^{(2)}$,  we set 
\[  \Sigma_k= \{e^{\frac{2\pi i}{k+\check{h}}\la\check{\mathring \rho}+\check{\lambda}, \cdot \ra  }\in \mathring{T}   \,|\,    \check{\lambda}\in \check{P}_k  \}   . \]

Let $R_k(A)$ denote the commutative ring of $\bbC$-valued functions on the finite set $\Sigma_{k}$.   We introduce an involution $*$ on the set $P_{k}$  by sending $\lambda$ to $-w_0(\lambda)$ where $w_0$ is the longest element in $\mathring{W}$.  

 For any $w\in \mathring{W}$,  let $\ell(w)$ be the length of $w$.  
  For any $\lambda\in P_{k}$ and $t\in \Sigma_k$, put 
\be \label{J_function}  J_{\lambda}(t) :=\sum_{w\in \mathring{W}} (-1)^{\ell(w)} w(\lambda+\mathring{\rho})(t)  ,\ee
and  $\mathring{\chi}_\lambda(  t ):=\frac{J_{\lambda} (t)  }{J_{0}(t)  } $. 

Let $\mathring{V}_\lambda$ denote the irreducible representation of $\mathring{\fg}$ of highest weight $\lambda$.   By Weyl character formula $\mathring{\chi}_\lambda(t)$ is the trace of $t$ on the representation $\mathring{V}_\lambda$.
Therefore we get a collection of well-defined functions $\{ \mathring{\chi}_\lambda\,|\,  \lambda\in P_k\}$ on $\Sigma_k$.   


Set $\Delta(t)=|J_{0}(t) |^2$. 
We introduce an Hermitian form $(\cdot, \cdot)$ on $R_k(A)$ as follows, for any $f,g\in R_k(A)$ 
\begin{equation}
\label{Inner_Product}
 (f, g):= \frac{1}{| \mathring{P}/(k+\check{h})M |}  \sum_{t\in \Sigma_k}   f(t)\overline{g(t)} \Delta(t).   
 \end{equation}

\begin{theorem}
\label{Orthnormal_Basis_Thm}
\ben
\item The Hermitian form $(\cdot, \cdot)$ on $R_k(A)$ is an inner product. 
\item The set of functions $\{ \mathring{\chi}_\lambda \,|\, \lambda\in P_k\}$ is an orthonormal  basis of the ring $R_k(A)$ with respect to the inner product $(\cdot, \cdot)$.     
\een
\end{theorem}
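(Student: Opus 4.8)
The plan is to prove the orthonormality relations $(\mathring{\chi}_\lambda,\mathring{\chi}_\mu)=\delta_{\lambda\mu}$ for $\lambda,\mu\in P_k$; both parts of the theorem follow from these together with a dimension count. Throughout write $m=k+\check{h}$. Part (1) is in fact the easy half: by construction $\Delta(t)=|J_{0}(t)|^2\ge 0$, and each point $t_\lambda=e^{\frac{2\pi i}{m}(\mathring{\rho}+\lambda|\cdot)}$ of $\Sigma_k$ is \emph{regular}, since $\mathring{\rho}+\lambda$ is strictly dominant; hence $J_{0}(t_\lambda)\ne 0$ and $\Delta(t_\lambda)>0$. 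Therefore
\[
(f,f)=\frac{1}{|\mathring{P}/mM|}\sum_{t\in\Sigma_k}|f(t)|^2\,\Delta(t)\ge 0,
\]
with equality if and only if $f$ vanishes on all of $\Sigma_k$, i.e. $f=0$ in $R_k(A)$. As the form is visibly Hermitian, this proves (1).

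For (2), note that $R_k(A)$ has dimension $|\Sigma_k|=|P_k|$ (the equality $|\Sigma_k|=|P_k|$, i.e. injectivity of $\lambda\mapsto t_\lambda$, is part of the combinatorial input below). Since the Gram matrix $\big((\mathring{\chi}_\lambda,\mathring{\chi}_\mu)\big)$ being the identity forces the $\mathring{\chi}_\lambda$ to be linearly independent, and their number equals $\dim R_k(A)$, it suffices to establish orthonormality. By the Weyl character formula $\mathring{\chi}_\lambda=J_\lambda/J_{0}$, and since $\Delta=J_{0}\overline{J_{0}}$ the denominators cancel, giving
\[
(\mathring{\chi}_\lambda,\mathring{\chi}_\mu)=\frac{1}{|\mathring{P}/mM|}\sum_{t\in\Sigma_k} J_\lambda(t)\,\overline{J_\mu(t)}.
\]

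Next I would unfold the $\mathring{W}$-symmetry. Each $J_\nu$ is $\mathring{W}$-antisymmetric, so $J_\lambda\overline{J_\mu}$ is $\mathring{W}$-invariant, and $J_\nu$ vanishes at every non-regular point. Viewing $\Sigma_k$ inside the finite abelian group $\Gamma=\{e^{\frac{2\pi i}{m}(\sigma|\cdot)}\mid\sigma\in\mathring{P}\}\cong\mathring{P}/mM$ (using that $M$ is the dual lattice of $\mathring{P}$ under $(\cdot|\cdot)$ and $M\subseteq\mathring{P}$, so $|\Gamma|=|\mathring{P}/mM|$), the combinatorial lemma below says $\Sigma_k$ is a complete set of representatives for the regular $\mathring{W}$-orbits in $\Gamma$. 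Hence
\[
\sum_{t\in\Sigma_k} J_\lambda\overline{J_\mu}=\frac{1}{|\mathring{W}|}\sum_{t\in\Gamma}\ \sum_{w,w'\in\mathring{W}}(-1)^{\ell(w)+\ell(w')}\,t\big(w(\lambda+\mathring{\rho})-w'(\mu+\mathring{\rho})\big),
\]
since the values $t(\eta)$ are unitary. Orthogonality of characters of $\Gamma$ gives $\sum_{t\in\Gamma}t(\xi)=|\Gamma|$ when $\xi\in mM$ and $0$ otherwise, so only the terms with $w(\lambda+\mathring{\rho})\equiv w'(\mu+\mathring{\rho})\pmod{mM}$ survive.

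The crux, which I expect to be the main obstacle, is the following combinatorial fact: \emph{for $\lambda,\mu\in P_k$ and $w,w'\in\mathring{W}$ one has $w(\lambda+\mathring{\rho})\equiv w'(\mu+\mathring{\rho})\pmod{mM}$ if and only if $\lambda=\mu$ and $w=w'$.} Granting it, exactly the $|\mathring{W}|$ diagonal terms $w=w'$ contribute when $\lambda=\mu$, each with sign $+1$ and value $|\Gamma|$; thus $\sum_{t\in\Sigma_k}J_\lambda\overline{J_\mu}=|\mathring{P}/mM|\,\delta_{\lambda\mu}$, and dividing by $|\mathring{P}/mM|$ yields $(\mathring{\chi}_\lambda,\mathring{\chi}_\mu)=\delta_{\lambda\mu}$, completing (2). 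To prove the combinatorial fact I would use the identification $W\cong\mathring{W}\ltimes M$ recalled above, so that $\mathring{W}\ltimes mM$ is the affine Weyl group acting at level $m$; its fundamental alcove has interior $\frac{1}{m}\mathring{P}$-points precisely $\frac{1}{m}(\lambda+\mathring{\rho})$ for $\lambda\in P_k$, because $\langle\mathring{\rho},\check{\theta}\rangle=\check{h}-1$ forces $\langle\lambda+\mathring{\rho},\check{\theta}\rangle\le m-1<m$. The strictly dominant regular weights $\lambda+\mathring{\rho}$ are then the unique representatives of the regular orbits, giving both the rigidity claimed and the count $|\Sigma_k|=|P_k|$. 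This is exactly the step where the case distinctions in the definitions of $M$ and of $\check{\theta}$ (highest short coroot, its double for $A_{2n}^{(2)}$, or highest coroot otherwise) are essential; for the twisted types $X_N^{(r)}$ with $r>1$ and $A\ne A_{2n}^{(2)}$ the entire computation is run verbatim with coweights $\check{\lambda}\in\check{P}_k$ and the pairing $\langle\check{\mathring{\rho}}+\check{\lambda},\cdot\rangle$ in place of weights, which is the dual statement.
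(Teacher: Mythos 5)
Your proof is correct in substance, but for part (2) it takes a genuinely different route from the paper. The paper does \emph{not} prove orthonormality by direct computation: it imports it from conformal field theory, combining the Verlinde formula (Theorem \ref{Verlinde}, quoted from \cite{Be} and \cite{Ho2}) with the two-point conformal block identity $\tr(\sigma\mid \dot{V}_{\lambda,\mu}(\bbP^1))=\delta_{\lambda,\mu^*}$ (Lemma \ref{Conformal_Orth}), and then uses Lemmas \ref{Cardinality_group_lem}, \ref{Embedd_lem} and \ref{Fundamental_Set} only to translate that identity into the character sum (\ref{equ_char}). You instead prove (\ref{equ_char}) directly: unfold the sum over $\Sigma_k$ to a sum over the full finite group $T_k\cong\mathring{P}/(k+\check h)M$ using $\mathring{W}$-invariance of $J_\lambda\overline{J_\mu}$ and the vanishing of $J_\nu$ at non-regular points, expand via the definition of $J_\nu$, and apply orthogonality of characters of the finite abelian group together with the alcove rigidity statement ($w(\lambda+\mathring\rho)\equiv w'(\mu+\mathring\rho)\bmod (k+\check h)M$ iff $w=w'$, $\lambda=\mu$), which is exactly the content of the paper's Lemmas \ref{Embedd_lem} and \ref{Fundamental_Set}. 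Your argument is the classical elementary proof of unitarity of the Kac--Peterson $S$-matrix; it is self-contained and needs no conformal blocks, whereas the paper's derivation is shorter given its machinery and fits its theme of deducing everything from the (twisted) Verlinde formula. Part (1) is the same in both treatments, though you should state the regularity of $t_\lambda$ more carefully: strict dominance of $\mathring\rho+\lambda$ only gives $(\mathring\rho+\lambda\mid\alpha)>0$; you also need the upper bound $(\mathring\rho+\lambda\mid\alpha)\le k+\check h-1<k+\check h$ coming from Lemma \ref{Dual_Coxeter_lemma}, which you do supply but only later in the alcove discussion.

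Two small caveats for the twisted types $X_N^{(r)}$, $r>1$, $A\ne A_{2n}^{(2)}$, where your ``run verbatim in the dual picture'' hides a genuine step: there $\Sigma_k$ is parametrized by the coweights $\check P_k$ while the functions $\mathring\chi_\lambda$ are indexed by $P_k$, so the dimension count needed to upgrade the orthonormal set to a basis requires $|P_k|=|\check P_k|$; this is not automatic for the non-self-dual types $A_{2n-1}^{(2)}$ and $D_{n+1}^{(2)}$ and is handled in the paper by an explicit bijection $\omega_i\mapsto\check\omega_{\ell-i}$ (Lemma \ref{cardinality_lem}). Likewise the rigidity fact in that case concerns the action of $\mathring W\ltimes(k+\check h)\mathring Q$ on $\mathring P$ with $\check\theta$ the highest \emph{coroot}, not the dual of the highest root, so the alcove argument must be checked separately. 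Neither point derails your approach, but both need to be written out.
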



As a ring of functions on a finite set,   $R_k(A)$ is semismiple.   For any $\lambda,\mu\in P_k$, there exist unique coefficients $c_{\lambda\mu}^\nu$ such that
\[\mathring{\chi}_\lambda\cdot  \mathring{\chi}_\mu=\sum_{\nu\in P_k}   c_{\lambda\mu}^\nu   \mathring{\chi}_\nu,\]
where $c_{\lambda\mu}^\nu\in \bbC$.   
Observe that $\mathring{\chi}_{\lambda^*}(t)=\overline{\mathring{\chi}_\lambda(t)}$ for any $\lambda\in P_k$ and $t\in \Sigma_k$.  Then an immediate consequence of Theorem \ref{Orthnormal_Basis_Thm} is that the coefficients $c_{\lambda\mu}^\nu$ can be computed by the following formula
 \begin{equation}
 \label{Fusion_Coefficient}
  c^\nu_{\lambda\mu} =\frac{1}{   | \mathring{P} /(k+\check{h} ) M   | } \sum_{t\in \Sigma_k }   \mathring{\chi}_\lambda(  t )\mathring{\chi}_\mu(t  )\mathring{\chi}_{\nu^*}(  t )  \Delta( t )  .
  \end{equation}

We call the ring $R_k(A)$ together with the basis $\{ \mathring{\chi}_\lambda \,|\, \lambda\in P_k\}$  the fusion ring associated to the Kac-Moody algebra $\fg(A)$, and we call
 $c_{\lambda\mu}^\nu$  the fusion coefficients of $R_k(A)$.    
 
 The following theorem asserts the integrability of fusion coefficients, and the stabilization  when the level is sufficient large.
\bt 
\label{Integrality_Thm}
For any $\lambda,\mu,\nu\in P_k$, we have
\ben
\item The coefficients $c^{\nu}_{\lambda\mu}  \in \bbZ$. 
\item   If $\la   \lambda+\mu+\nu, \check{\theta}   \ra   \leq  2k$,  then  
\[  c^\nu_{\lambda\mu}=\dim \Hom_{\mathring{\fg}}(\mathring{V}_\nu, \mathring{V}_\lambda\otimes \mathring{V}_\mu).\]
\een
\et

The following theorem is  an incarnation of \cite[Theorem 1.6]{Ho2} in terms of fusion rings. 
\begin{theorem}
\label{Twining_fusion}
The fusion rings $R_{2k+1}(A_{2n}^{(2)})$ and $R_k(C_n^{(1)})$ are isomorphic. 
\end{theorem}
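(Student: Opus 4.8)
The approach is to exhibit the two fusion rings as \emph{literally the same} ring of functions on a common finite subset of $\mathring{T}$, so that the tautological map $\mathring{\chi}_\lambda\mapsto\mathring{\chi}_\lambda$ is the isomorphism. By the tables of Section 1 both affine types have $\mathring{\fg}=C_n$, so the Weyl group $\mathring{W}$, the vector $\mathring{\rho}$, the characters $\mathring{\chi}_\lambda$, and the weight $\Delta(t)=|J_{0}(t)|^2$ are all intrinsic to $C_n$ and do not depend on which affine algebra we began with. Since $R_k(A)$ is by definition the ring of $\bbC$-valued functions on $\Sigma_k$ with distinguished basis $\{\mathring{\chi}_\lambda\}_{\lambda\in P_k}$, everything reduces to two comparisons: of the index sets $P_k$, and of the finite sets $\Sigma_k\subset\mathring{T}$.

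First I would match the index sets. The paper records $\check{\theta}=\check{\theta}_s$ (the highest short coroot) for $C_n^{(1)}$ and $\check{\theta}=2\check{\theta}_s$ for $A_{2n}^{(2)}$. Hence $P_{2k+1}(A_{2n}^{(2)})=\{\lambda\in\mathring{P}^+:\langle\lambda,2\check{\theta}_s\rangle\le 2k+1\}$, and as $\langle\lambda,2\check{\theta}_s\rangle$ is even this is exactly $\{\lambda:\langle\lambda,\check{\theta}_s\rangle\le k\}=P_k(C_n^{(1)})$. So the two bases are indexed by the same set.

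The crux is the comparison of the forms and of $\Sigma$. Write $\kappa_1,\kappa_2$ and $(\cdot|\cdot)_1,(\cdot|\cdot)_2$ for the data attached to $C_n^{(1)}$ and $A_{2n}^{(2)}$. As $\mathring{\fg}$ is simple, the invariant form on it is unique up to scalar, so $\kappa_2=c\,\kappa_1$ for some $c$; evaluating the relations $\check{\theta}_s=\kappa_1^{-1}(\theta)$ and $2\check{\theta}_s=\kappa_2^{-1}(\theta)$ gives $c=\tfrac12$, hence on the weight space $\mathring{\fh}^*$ one has $(\cdot|\cdot)_2=2(\cdot|\cdot)_1$. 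On the other hand the shifted levels obey $(2k+1)+(2n+1)=2(k+n+1)$. These two factors of $2$ cancel: for $\lambda$ in the common index set and any $\mu\in\mathring{P}$,
\[
\frac{2\pi i}{(2k+1)+(2n+1)}(\mathring{\rho}+\lambda|\mu)_2=\frac{2\pi i}{2(k+n+1)}\cdot 2(\mathring{\rho}+\lambda|\mu)_1=\frac{2\pi i}{k+n+1}(\mathring{\rho}+\lambda|\mu)_1 ,
\]
so the two torus elements coincide. Therefore $\Sigma_{2k+1}(A_{2n}^{(2)})=\Sigma_k(C_n^{(1)})$ as subsets of $\mathring{T}$, parametrized in the same way by $P_k$.

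With $\Sigma$ and the basis functions $\mathring{\chi}_\lambda$ identified, the two rings of functions coincide as based rings and the identity is the asserted isomorphism; equivalently, formula \eqref{Fusion_Coefficient} produces the same fusion coefficients $c^\nu_{\lambda\mu}$, the normalizing constant $|\mathring{P}/(k+\check{h})M|$ matching as well because, by Theorem \ref{Orthnormal_Basis_Thm}, it equals $\sum_{t\in\Sigma_k}\Delta(t)$. The main obstacle is the bookkeeping in this last comparison: one must track that the scalar relating the forms is \emph{inverted} in passing between $\mathring{\fh}$ and $\mathring{\fh}^*$, so that $M=\kappa(\check{\mathring{Q}})$ rescales as $M_2=\tfrac12 M_1$ and the normalizing constants indeed agree, and one must check that $C_n$ sits inside the two affine diagrams with matching labels so that $\theta$ and $\mathring{\rho}$ correspond under the identification.
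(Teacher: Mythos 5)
Your proposal is correct and follows essentially the same route as the paper: both affine types share $\mathring{\fg}=C_n$, the parity argument identifies $P_{2k+1}(A_{2n}^{(2)})$ with $P_k(C_n^{(1)})$, and the factor of $2$ relating the normalized forms cancels against $(2k+1)+(2n+1)=2(k+n+1)$ to give $\Sigma_{2k+1}(A_{2n}^{(2)})=\Sigma_k(C_n^{(1)})$, so the two based rings of functions literally coincide. Your bookkeeping of the rescaling of $M=\kappa(\check{\mathring{Q}})$ and of the normalizing constant matches the paper's computation $M_1=(k+n+1)\mathring{Q}_l=M_2$.
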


Theorem \ref{Orthnormal_Basis_Thm}, Theorem \ref{Integrality_Thm} and Theorem \ref{Twining_fusion} are basically the consequence of Verlinde formula for dimension of conformal blocks (cf.\cite{Be}),  and the Verlinde formula for the trace of diagram automorphism on conformal blocks \cite{Ho2} by the author.  In the rest of note we will explain uniformly how they are deduced.  

When $A=X_N^{(1)}$,  the theory of fusion rings and conformal blocks  can be dated back to \cite{Fa,Be,TUY,V}.  The coefficients $c_{\lambda\mu}^\nu$ are always nonnegative, since they can be interpreted as the dimension of conformal blocks.

  When $A=X_N^{(r)}$ with $r>1$, the corresponding theory of fusion rings and its relation with usual conformal blocks is essentially explained  in \cite{Ho2},  although the role of twisted affine Lie algebras was not clearly clarified there.  
   We expect that the fusion coefficients $c_{\lambda\mu}^\nu$ are all nonnegative. For example by Theorem \ref{Twining_fusion} the fusion coefficients for $A_{2n}^{(2)}$ are always nonnegative when the level is odd.   We expect that there is an appropriate theory of twisted conformal blocks whose dimensions would correspond to the fusion coefficients for twisted affine Lie algebras. We also hope that there will be a theory of fusion category for twisted affine Lie algebra in the flavor of \cite{Fi}.


\section{Twining formula and Verlinde formula}
\subsection{Twining formula for tensor invariant space}

For each $\fg(A)$ with $A=X_N^{(r)}$, we attach a unique pair $(\dot{\fg},\sigma)$ where $\dot{\fg}$ is a simple Lie algebra and $\sigma$ is a diagram automorphism on $\dot \fg$ (possibly trivial) of order $r$.   Let $\mathring{A}$ (resp. $\dot A$) denote the Cartan matrix of $\mathring \fg$ (resp. $\dot \fg$).  The Cartan matrix $\dot A$ is attached as follows. 
 If $A$ is of untwisted affine type, then we take $\dot{A}=\mathring{A}$;  if $A$ is of twisted affine type, we have the following table:
\begin{equation}
\label{Orbit_Lie_Table}
 \begin{tabular}{|c  | c | c |c |c |c | c| c| c|c |c|c|c|c|c |c |} 
 \hline
$ A$   & $A_{2n}^{(2)} $  &  $A_{2n-1}^{(2)}$  & $D_{n+1}^{(2)}$  & $ E_6^{(2)}$  &  $D_4^{(3)} $    \\ [0.5ex] 
 \hline
$\mathring{A}$  &  $C_n$  &   $C_{n}$  &  $B_n$  &  $F_4$ &  $G_2$  \\ 
 \hline
$ \dot{A} $ &  $A_{2n}$ &  $ D_{n+1} $ & $A_{2n-1}$ & $E_6$ & $ D_4 $      \\ [1ex] 
 \hline
\end{tabular}.
\end{equation}

  The simple Lie algebra $\mathring{\fg}$ is the orbit Lie algebra of the pair $(\dot{\fg},\sigma)$ in the sense of \cite[Section 2.1]{Ho2}.

Let $\mathring{I}$ (resp. $\dot{I}$) denote the set of vertices of Dynkin diagram of  $\mathring{\fg}$ (resp. $\dot{\fg}$).  We will denote by $\dot P$ (resp. $\dot Q$) the weight lattice (resp. root lattice) of $\dot \fg$, and we denote by $\dot P^+$ the set of dominant weights. Let $\{ \dot \omega_j\,|\, j\in \dot I\}$ denote the fundamental weights of $\dot \fg$.  
 There is a bijection $p:  \mathring{I}\simeq \dot{I}/\sigma$, where $\dot{I}/\sigma$ denotes the set of $\sigma$-orbits on $\dot{I}$. Moreover
there is a bijection $\iota:   \mathring{P}\simeq  \dot{P}^\sigma $ and a projection $\check{\iota}:   \check{\dot{Q} }\to \check{\mathring{Q} } $ such that 
\ben
\item   $\iota(\mathring{\omega}_i )=\sum_{ j\in  p(i)   }  \dot{  \omega }_i   $ for any $i\in \mathring{I}$.  
\item  $ \la  \lambda,  \check{\iota}(\check{\beta})  \ra=\la \iota( \lambda), \beta  \ra$, for any $\lambda\in \mathring{P}$ and $\check{\beta}\in \check{\dot Q}$.
\een

For any $\lambda\in \mathring{P}^+$, let 
$\dot V_{\lambda}$ denote the irreducible representation of  $\dot{\fg}$ of highest weight $\iota(\lambda)\in \dot{P}^+$.   There is a unique operator $\sigma: \dot{V}_{\lambda}\to \dot{V}_{\lambda}$ such that 
\begin{enumerate}
\item $\sigma(x\cdot v)=\sigma(x)\cdot \sigma(v)$ for any $x\in \dot \fg$ and $v\in \dot V$,
\item $\sigma({v}_\lambda)={v}_\lambda$ where ${v}_\lambda$ is the highest weight vector of ${\dot V}_{\lambda}$.  
\end{enumerate}

For any tuple $\vec{\lambda}=(\lambda_1,\cdots, \lambda_m)$ of elements in  $\mathring{P}^+$, let $\dot{V}_{\vec{\lambda}}^{\dot \fg}$ (resp. $\mathring{V}_{\vec{\lambda}}^{\mathring{\fg}}$ ) denote the tensor invariant space 
\[  ( {\dot V}_{\lambda_1}\otimes \cdots \otimes {\dot V}_{\lambda_m})^{\dot \fg}   \text{ (  resp. }  ( {\mathring V}_{\lambda_1}\otimes \cdots \otimes {\mathring V}_{\lambda_m})^{\dot \fg}   ). \]

From each operator $\sigma$ on $\dot V_{\lambda_i}$, there exists an operator $\sigma$ (we still use the same notation) acting diagonally on ${\dot V}_{\vec{\lambda}} ^{\dot \fg} $.   The following twining formula was proved in \cite[Theorem 1.1]{HS}. 

\bt
\label{Jantzen_Thm}
We have the equality 
  $\tr(\sigma| \dot{V}^{\dot \fg}_{\vec{\lambda}})=\dim \mathring{V}_{\vec{\lambda}}^{\mathring \fg} $, where $\tr(\sigma |  \dot{V}^{\dot \fg}_{\vec{\lambda}}   )$ is the trace of $\sigma$ on $ \dot{V}^{\dot \fg}_{\vec{\lambda}}$.
\et

The twining formula for the weight spaces between representations of $\dot{\fg}$ and $\mathring{\fg}$ was first discovered by Jantzen (cf. \cite{Ho1,Ja}).  Theorem \ref{Jantzen_Thm} can actually be deduced from Jantzen formula (cf.\cite[Section 5.1]{Ho2}). 

\subsection{Verlinde formula}

Let $\bbC((t))$ be the field of Laurent series. 
Let $\hat{\CL}(\dot  \fg)$ be the associated affine Lie algebra $\fg((t))\oplus \bbC K \oplus \bbC d $ (cf.\cite[\S 7]{Ka}).  The diagram automorphism $\sigma$ still acts on $\hat{\CL}(\dot \fg)$.
Note that if $A$ is of untwisted affine type,  then $\fg(A)\simeq \hat{\CL}(\dot \fg)$. If $A$ is of twisted affine type, then the Kac-Moody algebra $\fg(A)$ is the orbit Lie algebra of  $(\hat{\CL}(\dot \fg),\sigma)$, and in fact there is also a twining formula for the weight spaces between representations of $\hat{\CL}(\dot \fg)$ and $\fg(A)$, which is due to Fuchs-Schellekens-Schweigert\cite{FSS}.

Let $\check{\dot \theta}$ denote the highest short coroot of $\dot \fg$.  
Let ${\dot P}_k$ denote the subset of $\dot P^+$ consisting of $\lambda\in \dot P^+$ such that $\la  \lambda ,   \check{ \dot \theta} \ra \leq  k$.  For any $\lambda\in  \dot P_k$,  let  ${\dot \CH}_{\lambda+k\dot{\Lambda}_0 }$ denote the irreducible integrable representation of $\hat{\CL}(\dot \fg)$ of highest weight $\lambda+k {\dot \Lambda}_0$, where $\dot \Lambda_0$ is the fundamental weight of $\hat{\CL}(\dot \fg)$ corresponding to the vertex $0$ in the extended Dynkin diagram of $\dot \fg$.

\bl
The isomorphism $\iota:  \mathring{P}\simeq  \dot{P}^\sigma$ restricts to the bijection $\iota:  P_k \simeq (\dot{P}_k)^\sigma$. 
\el
\bpf
 It is enough to show that for any $\lambda\in P_k$,  $\iota(\lambda)\in \dot{P}_k$.   
  By  \cite[Lemma 2.1]{Ho2},  $ \check{\iota}(\check{\dot \theta})=\check{ \theta}$.  For any $\lambda\in P_k$,  we have 
\[  \la \iota( \lambda), \check{\dot \theta} \ra=\la\lambda,  \check{\iota}(  \check{\dot \theta}  )   \ra = \la\lambda,  \check{ \theta}  \ra\leq k.  \]
Hence $\iota(\lambda)\in \dot{P}_k$.
\epf

Given an  $m$-pointed smooth projective curve $(C, \vec{p})$ over $\bbC$, on each point $p_i$ we associate a dominant weight $\lambda_i\in \dot P_k$ and an irreducible integral representation ${\dot \CH}_{\lambda_i+k\dot{\Lambda}}$
an affine Lie algebra $\hat{\CL}(\dot \fg)$ depending on the point $p_i$.  
Let ${\dot \fg}(C\backslash \vec{p})$ be the space of ${\dot \fg}$-valued regular functions on $C\backslash \vec{p}$.    The space ${\dot \fg}(C\backslash \vec{p})$ is naturally a Lie algebra induced from $\dot \fg$.  The Lie algebra  ${\dot \fg}(C\backslash \vec{p})$ acts on  
\[ {\dot  \CH}_{\vec{\lambda}}:=\dot{\CH}_{\lambda_1+k\dot{\Lambda}_0  }\otimes \cdots  \otimes \dot{\CH}_{\lambda_m+k \dot{\Lambda}_0 }  \]
naturally.  The space   $\dot{V}_{ \vec{\lambda}}(C,\vec{p})$ of conformal blocks associated to $\vec{p}$ and $\vec{\lambda}$  is defined as the coinvariant space of ${\dot  \CH}_{\vec{\lambda}}$ with respect to the action of ${\dot \fg}(C\backslash \vec{p})$ : 
$$ \dot{V}_{ \vec{\lambda}}(C,\vec{p}):=\dot{\CH}_{\vec{\lambda}}/ {\dot \fg}(C\backslash \vec{p})\dot{\CH}_{\vec{\lambda}}.$$
The trace  $\tr(\sigma | \dot{V}_{ \vec{\lambda}}(C,\vec{p}))$  is independent of the choice of $\vec{p}$ (cf.\cite[Section 3]{Ho2}).  For convenience we write $\tr(\sigma | \dot{V}_{ \vec{\lambda}}(C))$ by ignoring $\vec{p}$.

\bl
\label{Conformal_Orth}
We attach $\lambda, \mu\in \dot{P}_k$ to two distinct points on the projective line $\bbP^1$. Then  we have
\[\tr(\sigma| \dot{V}_{\lambda,\mu}(\bbP^1) ) =  \delta_{\lambda, \mu^*}  . \]
\el
\bpf
cf.\cite[Lemma 3.9]{Ho2}.
\epf

Let $\check{\dot h}$ denote the dual Coxeter number of $\hat{\CL}(\dot \fg)$.  By comparing Table (\ref{Table_Dual_Coxeter_1}), (\ref{Table_Dual_Coxeter_2}) and (\ref{Orbit_Lie_Table}), we can see that $\check{\dot h}=\check{h}$.

\bl
\label{M_Untwisted}
If $A=X_N^{(1)}$ or $A=A_{2n}^{(2)}$, then
\[  \kappa(\check{\mathring{Q}})=\begin{cases}
Q_l  \quad    A=X_N^{(1)}    \\
\frac{1}{2}Q_l    \quad   A=A_{2n}^{(2)}
\end{cases}.
\]
\el
\bpf
It follows from the discussions in \cite[\S6.5]{Ka}.
\epf

Recall  the torus $\mathring{T}=\Hom(\mathring{P}, \bbC)$.  
Set 
\[ T_{k}:=\{ t\in \mathring{T}   \,|\,      \alpha(t)=1,   \forall \alpha\in (k+\check{ h})M   \} .  \]
 Then $T_k$ is a finite subgroup of $\mathring{T}$. 
An element $t\in \mathring{T}$ is regular if $\alpha(t)\neq 1$ for any $\alpha\in \mathring{\Phi}$.  Equivalently an element $t\in \mathring{T}$ is regular if and only if the stabilizer group of $\mathring{W}$ at $t$ is trivial.   Let $T_k^{\rm reg}$ denote the set of regular elements in $T_k$.

The following is the general Verlinde formula for the trace of $\sigma$ on the space of conformal blocks.
\bt  
\label{Verlinde}
Let $(C,\vec{p})$ be an  $m$-pointed smooth projective curve of genus $g$.
Given a tuple $\vec{\lambda}=(\lambda_1,\lambda_2,\cdots, \lambda_m)$  of elements in $P_k$.    We have the following formula
\be
\label{main_formula}
\tr(\sigma| \dot{V}_{ \vec{\lambda}  }(C) )=|T_k|^{g-1} \sum_{t\in T^{\rm reg}_k/\mathring{W} } \mathring{\chi}_{\lambda_1}(t)\cdots \mathring{\chi}_{\lambda_m}(t) \Delta(t)^{1-g} .
\ee
\et
\bpf
When $A=X_N^{(1)}$,  it is just the usual Verlinde formula(cf.\cite{Be}).  When $A=X_N^{(r)}$ with $r>1$,  it is the main theorem of \cite{Ho2}.  Lemma \ref{M_Untwisted} ensures the data in the  formulas of \cite{Be, Ho2} matches with the note.
\epf

In particular if $C=\bbP^1$  and $\vec{\lambda}=(\lambda,\mu,\nu)$ with $\lambda,\mu,\nu\in P_k$, we have 
\begin{equation}
\label{conformal_block_line}
 \tr(\sigma| \dot{V}_{ \vec{\lambda} }(C)  )=\frac{1}{|T_k|} \sum_{t\in T^{\rm reg}_k/\mathring{W} } \mathring{\chi}_\lambda(t)\mathring{\chi}_\mu(t) \mathring{\chi}_\nu(t)\Delta(t) .  
 \end{equation}

  Let $W_{k}$ denote the affine Weyl group $\mathring{W}\ltimes (k+\check{h} )M$.   We define the dotted action of $W_{k}$  on $\mathring{P}$,
 \[ w\star \lambda=w(\lambda+\mathring{\rho})-\mathring{\rho}, \text{ for any } w\in W_{k}.   \]

  Let $W_{k}^\dagger$  denote the set consisting of the minimal representatives of the left cosets of $\mathring{W}$ in $W_{k}$.  For any $\lambda\in P_k$,  $w\star \lambda\in \mathring{P}^+$ if and only if $w\in  W^\dagger_{k}$.

 The following is a general version of Kac-Walton formula. 
 \bt\label{Kac_Walton}
 For any $\lambda,\mu, \nu\in { P}_k$, we have 
 \[ \tr(\sigma| \dot{V}_{ \lambda,\mu,\nu  }(\bbP^1)  )=\sum_{w\in W_{k}^\dagger }  (-1)^{\ell(w)} \dim \mathring{V}^{\mathring \fg}_{\lambda, \mu, w\star \nu}  .\]
 \et
 \bpf
When $A=X_N^{(1)}$ it is just the usual Kac-Walton formula (cf.\cite[Ex.13.35]{Ka}).  When $A=X_N^{(r)}$ with $r>1$,  it follows from \cite[Theorem 5.11]{Ho2} and Theorem \ref{Jantzen_Thm}.
 \epf

\section{Proofs }

\subsection{Proof of Theorem \ref{Orthnormal_Basis_Thm}}
\subsubsection{Inner product}

\begin{lemma} 
\label{Dual_Coxeter_lemma}
We have the following equality
\[  \check{h}=  
\la \mathring{\rho},  \check{\theta} \ra+1 
\]
\end{lemma}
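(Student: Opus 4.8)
The plan is to rewrite the right-hand side in terms of the comarks of $A$ and thereby reduce the identity to the single structural fact that $\check\theta$ is the ``finite part'' of the canonical central element. Recall that $\check{a}_i$ denotes the mark of the $i$-th vertex of the diagram of $A^t$, so that $K=\sum_{i=0}^n\check{a}_i\check\alpha_i$ and the dual Coxeter number is $\check h=\sum_{i=0}^n\check a_i$. In every affine type one has $\check a_0=1$ (this is Kac's normalization; for $A_{2n}^{(2)}$ it is the mark $a_0$, not the comark $\check a_0$, that equals $2$). Hence $\sum_{i=1}^n\check a_i\check\alpha_i=K-\check\alpha_0$ already lies in $\mathring\fh$, and the heart of the matter is the claim
\[ \check\theta=\sum_{i=1}^n\check a_i\check\alpha_i=K-\check\alpha_0. \]

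Granting this, the lemma is immediate. Since $\mathring\rho$ is the sum of the fundamental weights of $\mathring\fg$ we have $\langle\mathring\rho,\check\alpha_i\rangle=1$ for $i=1,\dots,n$, so
\[ \langle\mathring\rho,\check\theta\rangle=\sum_{i=1}^n\check a_i\langle\mathring\rho,\check\alpha_i\rangle=\sum_{i=1}^n\check a_i=\check h-\check a_0=\check h-1. \]
The displayed claim can moreover be phrased without reference to $K$: since $\langle\alpha_j,K\rangle=0$ and $\check a_0=1$, the element $\sum_{i\ge 1}\check a_i\check\alpha_i$ is characterized inside $\mathring\fh$ by $\langle\alpha_j,\,\cdot\,\rangle=-a_{j0}$ for $j=1,\dots,n$, so the claim amounts to checking $\langle\alpha_j,\check\theta\rangle=-a_{j0}$ against the $0$-th column of $A$.

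The substance is therefore to match the definition (\ref{Dual_Theta}) of $\check\theta$ with $K-\check\alpha_0$. When $A=X_N^{(1)}$ this is the classical relation $\check\alpha_0=K-\theta^\vee$ arising from $\alpha_0=\delta-\theta$, together with the fact (noted after (\ref{Dual_Theta})) that $\kappa^{-1}(\theta)=\theta^\vee$ is the highest short coroot; so the claim holds verbatim and recovers the familiar identity $\langle\mathring\rho,\theta^\vee\rangle=\check h-1$ for a simple Lie algebra. For the twisted types $A_{2n-1}^{(2)},D_{n+1}^{(2)},E_6^{(2)},D_4^{(3)}$ one uses the corresponding description of $\check\alpha_0$ from \cite[\S8]{Ka}, under which $K-\check\alpha_0$ is the highest coroot of $\mathring\fg$, exactly as in (\ref{Dual_Theta}). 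The genuinely delicate case, and the main obstacle, is $A_{2n}^{(2)}$: here $a_0=2$ while $\check a_0=1$, the node $0$ attaches through the non-reduced root structure, and $K-\check\alpha_0$ comes out equal to \emph{twice} the highest short coroot --- which is precisely the reason (\ref{Dual_Theta}) is forced to define $\check\theta$ by $\kappa^{-1}(\theta)$ rather than by a highest coroot there. Should a uniform treatment of this case prove awkward, one can instead verify the lemma directly from Tables (\ref{Table_Dual_Coxeter_1}) and (\ref{Table_Dual_Coxeter_2}): in standard coordinates one computes $\langle\mathring\rho,\check\theta\rangle$ as $n$ for $C_n^{(1)}$, as $2n$ for $A_{2n}^{(2)}$, and as $2n-1$ for $A_{2n-1}^{(2)}$, each equal to $\check h-1$, and similarly in the remaining finitely many cases.
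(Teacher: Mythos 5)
Your proof is correct, but it takes a more structural route than the paper. The paper's own argument is a case-by-case table comparison: for $A=X_N^{(1)}$ and $A=A_{2n}^{(2)}$ it checks the identity ``by comparing the numerical labels'' in Kac's tables against Tables (\ref{Table_Dual_Coxeter_1})--(\ref{Table_Dual_Coxeter_2}), and for the remaining twisted types it observes that $\la \mathring{\rho},\check{\theta}\ra+1$ is the Coxeter number of the dual root system of $\mathring{\fg}$ and compares with the table in \cite[\S 6.1]{Ka}. You instead reduce everything to the single identity $\check{\theta}=K-\check{\alpha}_0=\sum_{i\geq 1}\check{a}_i\check{\alpha}_i$, after which the lemma falls out of $\check{h}=\sum_{i\geq 0}\check{a}_i$ and $\check{a}_0=1$; your characterization of $K-\check{\alpha}_0$ by $\la\alpha_j,\cdot\,\ra=-a_{j0}$ is a clean way to make that identity checkable against the $0$-th column of $A$. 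What your approach buys is uniformity and an explanation of \emph{why} the paper's definition (\ref{Dual_Theta}) of $\check{\theta}$ is the right one in each branch (in particular why $A_{2n}^{(2)}$ forces ``twice the highest short coroot''); what it costs is that the key identity still has to be verified type by type from the explicit realizations of the affine algebras, so some case analysis survives — and your fallback of verifying $\la\mathring{\rho},\check{\theta}\ra=\check{h}-1$ directly from the tables is essentially the paper's proof. Both arguments are sound.
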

\begin{proof}
When $A=X_N^{(1)}$,  $\check{\theta}$ is the highest short coroot of $\mathring{\fg}$.  When $A=A_{2n}^{(2)}$,  $\check{\theta}$ is twice of the highest short coroot of $\mathring{\fg}$.  Then it is easy to check that $\check{h}=(\mathring{\rho},  \check{\theta}  )+1$    by comparing the numerical labels in the tables of \cite[\S4.8]{Ka} and Tables (\ref{Table_Dual_Coxeter_1}) (\ref{Table_Dual_Coxeter_2}).

    When $A=X_N^{(r)}$ with $r>1$ and $A\neq A_{2n}^{(2)}$, $\la\mathring{\rho},  \check{\theta} \ra+1$ is the Coxeter number of dual root system of $\mathring{\fg}$.  By comparing the table in \cite[\S6.1]{Ka},  it is also easy to see the equality. 

\end{proof}

To prove the Hermitian form defined in (\ref{Inner_Product}) is an inner product,  it is enough to show that $\Delta(t)>0$ for any $t\in \Sigma_k$.    By Weyl denominator formula 
\[ \Delta(t)=\prod_{\alpha\in \mathring{\Phi} }  (1-\alpha(t)) =  | \prod_{\alpha\in \mathring{\Phi}^+ }  (1-\alpha(t) )    |^2  . \]

When $A=X_N^{(1)}$ or $A=A_{2n}^{(2)}$, for any $\lambda\in P_k$ and $\alpha\in \mathring{\Phi}^+$,  we have $0\leq (  \lambda| \alpha ) \leq  k$.  
By Lemma \ref{Dual_Coxeter_lemma},   $(  \mathring{\rho} | \theta)= \la \mathring{\rho},  \check{\theta} \ra =\check{h}-1$. Since $\theta$ is the highest root of $\mathring{\fg}$,  for any $\alpha\in \mathring{\Phi}^+$ we have
\[0<  (\mathring{\rho}|\alpha )\leq (\mathring{\rho}| \theta )=\check{h}-1 .  \]  
In particular 
\[ 0<(\lambda+\mathring{\rho}|\alpha)\leq  k+\check{h}-1.\]
It shows that $\alpha(t)$ can not be equal to $1$.  Hence $\Delta(t)>0$ for any $t\in \Sigma_k$.

When $A=X_N^{(r)}$ with $r>1$ and $A\neq  A_{2n}^{(2)}$,  for any $\check{\lambda}\in \check{P}_k$, we have $0\leq  \la \alpha,  \check{ \lambda}  \ra \leq  k$. 
By Lemma \ref{Dual_Coxeter_lemma},  for any $\alpha\in \mathring{\Phi}^+$,  we always have
\[  0< \la \alpha,  \check{\mathring \rho}+\check{\lambda} \ra <  k+\check{h}.\]  
Hence in this case,  we also have $\Delta(t)>0$ for any $t\in \Sigma_k$.

\subsubsection{Orthonormal basis}
Recall that $T_k$ is the group consisting of $t\in \mathring{T}$ such that $\alpha(t)=1$ for any $\alpha\in (k+\check{h})M$.   
The group $T_k$ is naturally isomorphic to 
\begin{equation} 
\label{Natural_Iso}
\begin{cases}
\mathring{P}/(k+\check{h})M   \quad  \text{if } A=X_N^{(1)} \text{ or } A=A_{2n}^{(2)}  \\
\check{\mathring{P}}/ (k+\check{h})\check{\mathring{Q}}  \quad  \text{ otherwise}
\end{cases} . \end{equation}

\bl
\label{Cardinality_group_lem}
$|T_k|= |  \mathring{P}/(k+\check{h})M |$.
\el
\bpf
When $A=X_N^{(1)}$ or $A=A_{2n}^{(2)}$,  it simply follows from the natural isomorphism (\ref{Natural_Iso}).  

When $A=X_N^{(r)}$ with $r>1$ and $A\neq A_{2n}^{(2)}$,  $\check{\mathring{P}}/ (k+\check{h})\check{\mathring{Q}}$ is Pontryagin dual to  $\mathring{P}/(k+\check{h})\mathring{Q}$.   Then the lemma follows from the natural isomorphism (\ref{Natural_Iso}).
\epf

We  recall  the finite set $\Sigma_k$ in $\mathring{T}$.  It is clear that  $\Sigma_k$ is a subset of $T^{\rm reg}_k$.     

When $A=X_N^{(1)}$ or $A=A_{2n}^{(2)} $,  let $\eta_1:  P_k\to  \Sigma_k$ be the map by assigning $\lambda\in P_k$ to $t_\lambda: = e^{\frac{2\pi i}{k+\check{h}}( \mathring{\rho}+\lambda|\cdot )} \in \Sigma_k $.  
When $A=X_N^{(r)}$ with $r>1$ and $A\neq A_{2n}^{(2)} $,  let $\eta_2:  \check{P}_k\to \Sigma_k$ be the map assigning $\check{\lambda}\in \check{P}_k$ to $t_{\check{\lambda}}:=   e^{\frac{2\pi i}{k+\check{h}}\la\check{\mathring \rho}+\check{\lambda}, \cdot \ra  } $.

\bl
\label{Embedd_lem}
The maps $\eta_1,\eta_2$ defined above are bijections. 
\el
\bpf
We first assume that  $A=X_N^{(1)}$ or $A=A_{2n}^{(2)}$.    It is enough to show that $\eta_1$ is injective, equivalent it is enough to show that  $i_1: P_k \to \mathring{P}/(k+\check{h})M$ the map  given by 
\[  \lambda\mapsto  \lambda+\mathring{\rho} \mod  (k+\check{h})M   \]
is injective. For any $\lambda\in (k+\check{h})M$ and $w\in \mathring{W}$, we always have 
\begin{equation}
\label{equ_1}
  \la \lambda, w(\check{\theta} )\ra \in (k+\check{h})\bbZ.
  \end{equation}

For any $\lambda_1, \lambda_2\in P_k$, we have  $0\leq \la \lambda_i, \check{\theta} \ra\leq k$.  It also follows that for any $w\in \mathring{W}$,  
\begin{equation}\label{equ_2}  -k \leq  \la \lambda_i, w(\check{\theta}) \ra\leq k   . \end{equation}

If  $i(\lambda_1)=i(\lambda_2)$, equivalently $\lambda_1-\lambda_2\in (k+\check{h})M$,   
then by (\ref{equ_1}) and (\ref{equ_2}) we have  
\[ \la\lambda_1-\lambda_2,  w(\check{\theta})  \ra =0 ,\]
for any $w\in \mathring{W}$.  Since $\{ w(\check{\theta})\,|\, w\in \mathring{W} \}$ spans $\mathring{\fh}$, it follows that $\lambda_1=\lambda_2$.  It proves the injectivity of $i_1$.    

When $A=X_N^{(r)}$ with $r>1$ and $A\neq A_{2n}^{(2)}$, we consider the map $i_2: \check{P}_k \to \check{\mathring{P}}/ (k+\check{h})\check{\mathring Q}$ given by 
\[\check{\lambda}\mapsto  \check{\lambda}+\check{\mathring \rho}  \mod  (k+\check{h}) \check{\mathring{Q}} .\]
In this case the lemma can be proved similarly. 
\epf

\bl 
\label{cardinality_lem}
 When $A=X_N^{(r)}$ with $r>1$ and $A\neq  A_{2n}^{(2)}$, we have
$|P_k|=|\check{P}_k|$.
\el
\bpf
When $A\neq A_{2n-1}^{(2)}$ and $A\neq D_{n+1}^{(2)}$,  it is clear that $|P_k|=|\check{P}_k|$ since the dual Lie algebra of $\mathring{\fg}$ is isomorphic to $\mathring{\fg}$.   

When $A= A_{2n-1}^{(2)}$ or $A= D_{n+1}^{(2)}$,  we define a map $\epsilon: \mathring{P}\to \check{\mathring P}$ given by $\omega_i\mapsto \check{\omega}_{\ell-i}$ where $\ell$ is the rank of $\mathring \fg$.    
Note that $\epsilon$ induces the dual map $\check{\epsilon}:  \mathring{Q}\to  \check{\mathring Q}$.  Observe that $\check{\epsilon}( \theta )=\check{\theta}$, where $\check{\theta}$ is the highest coroot of $\mathring{\fg}$.  
It is now clear that $\epsilon$ defines a bijection $P_k\simeq \check{P}_k$.    Hence the lemma is proved.
\epf

\bco
\label{Sigma_P}
For any generalized Cartan matrix $A$ of affine type, we have  $|\Sigma_k|=|P_k|$.  
\eco
\bpf
It follows from Lemma \ref{Cardinality_group_lem} and Lemma \ref{Embedd_lem}.
\epf

\bl
\label{Fundamental_Set}
$\Sigma_k$ is the fundamental set of $T^{\rm reg}_k$ with respect to the action of the Weyl group $\mathring{W}$, i.e. for any $t\in T^{\rm reg}_k$ there exists a unique element $t_0\in \Sigma_k$ and a unique $w\in \mathring{W}$ such that $w(t_0)=t$.
\el
\bpf
By Lemma \ref{Embedd_lem}, we are reduced to consider the action of the Weyl group $\mathring{W}$ on $\mathring{P}/(k+\check{h})M$. 

When $A=X_N^{(1)}$ or $A=A_{2n}^{(2)}$,   it follows from the fact that the set  $\{\mathring{\rho}+ \lambda \,|\,  \lambda\in P_k   \}$ is exactly those integral weights sitting in the interior of fundamental alcove with respect to the action of the affine Weyl group $\mathring{W}\ltimes  (k+\check{h})M$ on $\mathring{P}\otimes \bbR$.   

When $A=X_N^{(r)}$ with $r>1$ and $A\neq A_{2n}^{(2)}$,   it follows from the fact that the set  $\{ \check{\mathring{\rho}}+ \check{\lambda} \,|\,  \lambda\in \check{P}_k   \}$ is  those integral coweights sitting in the interior of fundamental alcove with respect to the action of the affine Weyl group $\mathring{W}\ltimes  (k+\check{h})\check{\mathring Q}$ on $\check{\mathring{P}}\otimes \bbR$.

\epf

From this lemma, we immediately get $\Sigma_k=T_k^{\rm reg}/\mathring{W}$.  By Theorem \ref{Verlinde}, Lemma \ref{Cardinality_group_lem} and Lemma \ref{Fundamental_Set},  Lemma \ref{Conformal_Orth} is equivalent to 
\begin{equation}\label{equ_char}
  \frac{1}{|  \mathring{P}/(k+\check{h})M   |} \sum_{t\in \Sigma_k}  \mathring{\chi}_\lambda(t) \mathring{\chi}_{\mu^*}(t)\Delta(t) =\delta_{\lambda\mu} , \end{equation}
for any $\lambda, \mu\in P_k$.   Recall the definition of the Hermitian form $(\cdot,\cdot)$ in (\ref{Inner_Product}),  the formula (\ref{equ_char}) implies that $ \{\mathring{\chi}_\lambda\,|\, \lambda\in P_k\}$ is an orthonormal subset of $R_k(A)$.

In the end in view of Corollary \ref{Sigma_P}, the dimension $R_k(A)$ is equal to the cardinality of $P_k$.  It follows that  $ \{\mathring{\chi}_\lambda\,|\, \lambda\in P_k\}$ is indeed an orthonormal basis of $R_k(A)$ with respect to the inner product $(\cdot,\cdot)$.

\subsection{Proof of Theorem \ref{Integrality_Thm}}

From the proof of Theorem \ref{Orthnormal_Basis_Thm}, the Verlinde formula (Theorem \ref{Verlinde}) implies that 
\begin{equation} \label{fusion_trace}
c_{\lambda\mu}^\nu=   \tr(\sigma| \dot{V}_{\lambda, \mu,\nu^*} ^{\dot \fg}) , \end{equation}
for any $\lambda, \mu,\nu\in P_k$.  
By Kac-Walton formula (Theorem \ref{Kac_Walton}) we have 
\be\label{fusion_kac_walton}
 c_{\lambda\mu}^\nu=  \sum_{w\in W_{k}^\dagger }  (-1)^{\ell(w)} \dim  \mathring{V}_{\lambda, \mu, w(\mu^*) }^{\mathring{\fg}}   .\ee

It follows that for any $\lambda, \mu,\nu\in P_k$,  $c_{\lambda\mu}^\nu\in \bbZ$.  

We now proceed to prove the stabilization of fusion coefficients.  We first observe that $\la \lambda^*, \check{\theta} \ra=\la\lambda,  \check{\theta}  \ra $ for any $\lambda\in \mathring{P}$, since $\check{\theta}^*=\check{\theta}$.  Hence the condition $\la\lambda+\mu+\nu, \check{\theta}  \ra\leq  2k$ is equivalent to the condition $\la \lambda+\mu+\nu^*,  \check{\theta}  \ra\leq  2k$. 
 
 When $A=X_N^{(1)}$,  the stabilization follows from \cite[Proposition 4.3]{Be}.

When $A=X_N^{(r)}$ with $r>1$,  $\dot \fg$ is simply-laced, and $\lambda,\mu,\nu$ satisfies the following inequality
\[   \la \iota(\lambda)+\iota(\mu)+\iota(\nu)^*, \check{\dot \theta} \ra\leq 2k   . \]

 Again in view of \cite[Proposition 4.3]{Be}  we have 
\[ \dot{V}_{\lambda, \mu,\nu^*}(\bbP^1)\simeq  (\dot{V}_\lambda\otimes \dot{V}_\mu\otimes \dot{V}_{\nu^*})_{\dot \fg} . \]
 In particular it follows that 
 \[ \tr(\sigma| \dot{V}_{\lambda, \mu,\nu^*}(\bbP^1)  )=\tr(\sigma|   (\dot{V}_\lambda\otimes \dot{V}_\mu\otimes \dot{V}_{\nu^*})_{\dot \fg}   )=\dim  (\mathring{V}_\lambda\otimes \mathring{V}_\mu\otimes \mathring{V}_{\nu^*}) _{\mathring{\fg}} ,  \]
 where the second equality follows from Theorem \ref{Jantzen_Thm}, since the tensor invariant space and tensor co-invariant space of $\dot \fg$ and $\mathring{\fg}$ are naturally isomorphic. 
  
 \subsection{Proof of Theorem \ref{Twining_fusion}}

Let $A_1=A_{2n}^{(2)}$ and $A_2=C_n^{(1)}$.
The underlying simple Lie algebras $\mathring{\fg}_1$ and $\mathring{\fg}_2$ are both isomorphic to the simple Lie algebra of type $C_n$.  We use notation $\mathring{\fg}$ for type $C_n$ simple Lie algebra to identity $\mathring{\fg}_1$ and $\mathring{\fg}_2$.  
Let $M_1$ (resp. $M_2$) be the lattice in $\mathring{\fh}^*$ associated to the generalized Cartan matrix $A_1$ (resp. $A_2$ ) of level $2k+1$ (resp. $k$).  
Then we have
\[ M_1  =(2k+1+ 2n+1) \frac{1}{2}\mathring{Q}_l   =  (k+n+1)\mathring{Q}_l=(k+n+1)\mathring{Q}_l =M_2 . \]

Let $(\cdot |\cdot)_i$ be the  inner product on $\mathring{\fh}$ associated to $A_i$ for each $i=1,2$.  Note that $(\cdot |\cdot)_1=2(\cdot |\cdot )$.  Let $P_{k}(A_i)$ be the finite set of dominant weights associated to $A_i$ as in (\ref{Weight_Level}) for each $i=1,2$.
In particular we have 
\[  P_{2k+1}(A_1)=\{ \lambda\in \mathring{P}   \,|\,   (  \lambda |  \theta  )_1 \leq   2k+1       \}  \]
and 
\[ P_k (A_2)  =\{ \lambda\in  \mathring{P}   \,|\,     (  \lambda |  \theta  )_2   \leq  k    \}   .\]
It is now not difficult to see that 
\[ P_{2k+1}(A_1)= P_k (A_2).  \]
We set $P_k:=P_k (A_2)=P_{2k+1}(A_1)$. Then the fusion ring $R_{2k+1}(A_1)$ and $R_k(A_2)$ have bases indexed by the same set $P_k$.  

Let $\Sigma_k(A_i)$ be the finite set of $\mathring{T}$ defined as in (\ref{Finite_Set_torus}) for each $i=1,2$. 
We observe that 
\[  \Sigma_{2k+1}(A_1)=\Sigma_k(A_2),\]
since for any  $\lambda\in P_k$  we have
\[ e^{\frac{2\pi i}{ 2k+1+2n+1  } ( \lambda+\mathring{\rho} | \cdot  )_1}  =  e^{\frac{2\pi i}{ k+n+1  } ( \lambda+\mathring{\rho} | \cdot  )_2    }.   \]  
From Formula (\ref{Fusion_Coefficient}), we see that the fusion rings $R_{2k+1}(A_1)$ and $R_k(A_2)$ are indeed isomorphic.

\section{Modular S-matrix}
The relation between the fusion ring and modular $S$-matrix is well-known  when the genearlized Cartan matrix is of untwisted affine type, see details in \cite[\S13.8, Ex.13.34,13.35]{Ka}.
In the case of twisted affine case, we use the fusion rings that we have just defined to relate to the $S$-matrix for twisted affine algebras (cf.\cite[\S13.9]{Ka}).   For convenience we still treat all generalized Cartan matrices of affine type generally. 

 We first assume $A=X_N^{(r)}$ with $r>1$ but $A\neq A_{2n}^{(2)}$.  
 For any $A$, we attach the adjacent Cartan matrix $A'$ as follows:
  \begin{equation}
 \label{Table_S_Matrix}
 \begin{tabular}{|c  | c | c |c |c |c | c| c| c|c |c|c|c|c|c |c ||} 
 \hline
$ A$    &  $A_{2n-1}^{(2)}$  & $D_{n+1}^{(2)}$  & $ E_6^{(2)}$  &  $D_4^{(3)} $    \\ [0.5ex] 
 \hline
$A'$    &   $D_{n+1}^{(2)}$  &  $A_{2n+1}^{(2)}$  &  $E_6^{(2)}$ &  $D_4^{(3)}$  \\ 
 \hline
\end{tabular}.
\end{equation}

 We define a linear isomorphism $\epsilon:  \mathring{\fh}'^* \simeq \mathring{\fh}$ such that  
 \begin{enumerate}
\item If $A=A_{2n-1}^{(2)}$ or $A=D^{(2)}_{n+1}$, then
 $\epsilon(\omega'_i)=\check{\omega}_{n-i}$ for any $i=1,\cdots, n$ ; 
 \item otherwise $\epsilon(\omega'_i)=\check{\omega}_i$ for any $i=1,\cdots, n$,
 \end{enumerate}
 where $\check{\omega}_i$ denotes the fundamental coweight of $\mathring{\fg}$.  
 
 Let $\check{\theta}'$ be an element in the coroot lattice of $\mathring{\fg}'$ defined similarly as $\check{\theta}$ in (\ref{Dual_Theta}), and  let $P'_k$ be similarly defined as $P_k$ in (\ref{Weight_Level}).   
  \bl
  The map $\epsilon$ restricts to a bijection $\epsilon:  P'_k\simeq \check{P}_k$.
 \el
 \begin{proof}
 The root system of  $\mathring{\fg}'$ is dual to $\mathring{\fg}$.   The lemma is clear by looking at the Dynkin diagram of $\mathring{\fg}$ and $\mathring{\fg}'$.  
 \end{proof}

 We now identify $\mathring{\fh}$ and $\mathring{\fh}'$ via the isomorphism $\varsigma:=\epsilon^*\circ \kappa$ from $ \mathring{\fh}$ to $\mathring{\fh}'$, where $\epsilon^*$ is the dual operator of $\epsilon: \mathring{\fh}'^*\simeq \mathring{\fh}$. 
  
 When $A=X_N^{(1)}$ or $A=A_{2n}^{(2)}$,  we take $A'=A$ and take $\varsigma$ to be the identity on $\mathring{\fh}$.  
 
 When $A=X_N^{(r)}$ with $r>1$ and $A\neq A_{2n}^{(2)}$,  for any $\mu'\in P_k$, let $t_{\mu'}$ denote the element in $\Sigma_k$ associated to $\epsilon(\mu')\in \check{P}_k$.   When $A=X_N^{(1)}$ or $A=A_{2n}^{(2)}$, for any $\mu'\in P'_k=P_k$, we denote by $t_{\mu'}$ the element in $\Sigma$ associated to $\mu'\in P_k$.  Put 
 \begin{equation}
 \label{S_Matrix}
 S_{\lambda,\mu'}=i^{|\mathring{\Phi}^+|}|\mathring{P}/(k+\check{h})M|^{-1/2} J_{\lambda}( t_{\mu' }) ,    \end{equation}
 where $i$ is the imaginary unit. 
 
 The matrix $S=(S_{\lambda,\mu'})$ is indexed by $P_k\times P'_k$.  We call it the modular matrix from $A$ to $A'$.  
  Let $S^t$ (resp. $\bar{S}^t$) be the transpose (resp. conjugate transpose) of $S$.    Then $S^t$ (resp. $\bar{S}^t$) is a matrix indexed by $P'_k\times P_k$.
  
  \bl
 $S\bar{S}^t$ is the identify matrix indexed by $P_k \times P_k$.  
  \el
  \bpf
 Note that the number of positive roots of type $B_n$ is equal to the number of positive roots of type $C_n$ (cf.\cite[\S12.2, Table 1]{Hu}). It is easy to see that $i^{|\mathring{\Phi}^+|}=i^{|\mathring{\Phi}'^+ | }$ for any $A$, where $\mathring{\Phi}'^+$ is the set of positive roots of $\mathring{\fg}'$.  
  Hence the lemma is  equivalent to that $\{\mathring{\chi}_\lambda \,|\, \lambda\in P_k\}$ is an orthonormal basis of $R_k(A)$ with respect to the inner product $(\cdot,\cdot)$ defined in (\ref{Inner_Product}).  
  \epf

  Note that $(A')'=A$. Let $S'$ be the modular matrix $(S_{\lambda',\mu})$  from $A'$ to $A$.
 \bl
 $S^t=S'$.
 \el
 \bpf
 When $A=X_N^{{(1)}}$ or $A=A_{2n}^{(2)}$,  the lemma is clear since $J_{\lambda}(t_\mu)=J_{\mu}(t_\lambda)$.
 
We now assume $A=X_N^{(r)}$ with $r>1$ and $A\neq A_{2n}^{(2)}$.  Note that $M=\mathring{Q}$ (resp. $M'=\mathring{Q}'$).
moreover 
\[  \check{h}'=\check{h}, \quad \epsilon(\mathring{P}')=\check{\mathring{P}}, \text{ and } \epsilon(\mathring{Q}')=\check{\mathring{Q}}  .\]
It follows that $\epsilon$ induces the isomorphism
\[  \mathring{P}'/(k+\check{h}' )\mathring{Q}' \simeq  \check{\mathring{P}}/(k+\check{h})\check{\mathring{Q}}  . \]
From (\ref{J_function}), we observe that  $J_\lambda(t_{\mu'})=J_{\mu'}(t_\lambda)$ for any $\lambda\in P_k$ and $\mu'\in P'_k$.  It concludes the Lemma.
 \epf

  For any dominant integral weight $\Lambda$ of $\fg(A)$ of level $k$, we denote by $\CH_\Lambda$ the irreducible integrable representation of highest weight $\Lambda$.   Let $\chi_\Lambda$ denote the normalized character of $\fg(A)$, i.e. 
$\chi_\Lambda= e^{-m_\Lambda \delta} {\rm ch}_{\CH_\Lambda}$, where ${\rm ch}_{\CH_\Lambda}$ is the character of $\CH_\Lambda$ and the number $m_\Lambda$ is the so-called modular anomaly (cf.\cite[\S12.7.5]{Ka}).  
Let $V(A,k)$ be the vector space spanned by the normalized characters  $\chi_\Lambda$ of level $k$.   Then the matrix $S$ gives a linear transformation from $V(A,k)$ to $V(A',k)$.  For precise transformation formula, see \cite[Theorem 13.8, 13.9]{Ka}. In the literature the formula (\ref{S_Matrix}) is called Kac-Peterson formula  (cf.\cite{Ka,Ka1}) for the modular $S$-matrix as the linear transformation from $V(A,k)$ to $V(A',k)$.


 \section*{Acknoledgements}  
 This note originated from the comments of I.\,Cherednik on the work \cite{Ho2}. The elementary definition of fusion rings presented in this note should be well-known in the case of untwisted affine Lie algebra, but the author is not able to find an appropriate reference.  
This work is partially supported by the Simons Foundation collaboration grant 524406.

 \end{document}